\def\Hom{\mathrm{Hom}}
\def\Kbar{\overline{K}}
\def\Gal{\mathrm{Gal}}
\def\Pic{\mathrm{Pic}}
\theoremstyle{plain}
\def\varrhobar{\varrho}
\newtheorem{theorem}[]{Theorem}
\newtheorem{lemma}[]{Lemma}
\theoremstyle{definition}
\newcommand{\C}{\mathbf{C}}
\newcommand{\F}{\mathbf{F}}
\newcommand{\Q}{\mathbf{Q}}
\newcommand{\Z}{\mathbf{Z}}
\newcommand{\Fbar}{\overline{\F}}
\newcommand{\Qbar}{\overline{\Q}}
\newcommand\im{\mathrm{im}}
\newcommand\rhobar{\overline{\rho}}
\DeclareMathOperator{\GL}{GL}
\DeclareMathOperator{\PGL}{PGL}
\DeclareMathOperator{\GSp}{GSp}
\DeclareMathOperator{\Sp}{Sp}
\DeclareMathOperator{\PSp}{PSp}
\def\AA{\mathcal{A}}
\title[Rationality of twists of~$\AA_2(3)$]{Rationality of twists of the Siegel modular variety of genus $2$ and level $3$}
\author[F. Calegari]{Frank Calegari}  \email{fcale@uchicago.edu} \address{The University of Chicago,
5734 S University Ave,
Chicago, IL 60637, USA}
\author[S. Chidambaram]{Shiva Chidambaram}  \email{shivac@mit.edu} \address{Massachusetts Institute of Technology,
77 Massachusetts Ave. 
Cambridge, MA 02139, USA}
\thanks{Both authors were supported in part by NSF Grants
  DMS-1701703. The first author was supported in part by DMS-2001097, and the second author was
  supported in part by the Simons Foundation (grant 550033).}
\begin{document}

\begin{abstract} Let~$\rhobar: G_{\Q} \rightarrow \GSp_4(\F_3)$ be a continuous Galois representation
with cyclotomic similitude character. Equivalently, consider $\rhobar$ to be the Galois representation associated to the~$3$-torsion
of a principally polarized abelian surface~$A/\Q$. We prove that the moduli space~$\AA_2(\rhobar)$
of principally polarized abelian surfaces~$B/\Q$ admitting a symplectic isomorphism~$B[3] \simeq \rhobar$
of Galois representations is never rational over~$\Q$ when~$\rhobar$ is surjective, even though
it is both rational over~$\C$ and unirational over~$\Q$ via a map of degree~$6$.
\end{abstract}

\maketitle

\section{Introduction}

Let~$p$ be a prime and suppose that~$A/\Q$ is an abelian variety of dimension~$g$ with a polarization of degree prime to~$p$.
Associated to the action of the absolute Galois group~$G_{\Q}$ on~$A[p]$ there exists
a Galois representation
$$\rhobar: G_{\Q} \rightarrow \GSp_{2g}(\F_p)$$
such that the corresponding  similitude character is the mod-$p$ cyclotomic 
character~$\varepsilon$. One can ask, conversely, whether any such representation
comes from an abelian variety in infinitely many ways.
When~$g = 1$, this question is well-studied, and has a positive answer exactly for~$p = 2$,~$3$, and~$5$. Indeed, the corresponding twists~$X(\rhobar)$
of the modular curve~$X(p)$ are rational over~$\Q$ for~$p = 2$,~$3$, and~$5$,
and have higher genus for larger~$p$.

In~\cite{BCGP}, this question arose for abelian surfaces ($g = 2$) when~$p = 3$.
(The case~$p = 2$, which is also discussed in that paper,
is understood by analyzing the branch points of the 
hyperelliptic involution.)
Let~$\AA_2(3)$ denote the Siegel modular variety of genus~$2$ and level~$3$. It is the moduli space of principally polarized
abelian surfaces together with a symplectic isomorphism~$A[3] \simeq (\Z/3\Z)^2 \oplus (\mu_3)^2$.
Given a~$\rhobar$ as above, one can form the corresponding moduli space~$\AA_2(\rhobar)$
where now one insists that there is a symplectic isomorphism~$A[3] \simeq V$, where~$V$
is the 
representation space of~$\rhobar$ with its symplectic structure.
The variety~$\AA_2(3)$ is well-known to be birational to the Burkhardt  quartic,
which is rational over~$\Q$ (\cite{Bruin}). It is clear that~$\AA_2(\rhobar)$ is isomorphic to~$\AA_2(3)$ over~$\C$ (and even over the fixed field
of the kernel of~$\rhobar$), and hence~$\AA_2(\rhobar)$ is \emph{geometrically}
rational. If~$\AA_2(\rhobar)$ was in fact \emph{rational} (by which we always
mean rational over the base field), then indeed the answer to the question
above would be positive, just as for elliptic curves when~$p \le 5$.
In~\cite[Prop~10.2.3]{BCGP}, 
a weaker result was established:
The variety~$\AA_2(\rhobar)$ is unirational over~$\Q$ via a map
of degree at most~$6$.
As a consequence, any such~$\rhobar$ \emph{does} arise from (infinitely many)  abelian surfaces.
We refer the reader to~\cite{CCR} which produces explicit polynomials describing the universal family over a rational cover of~$\AA_2(\rhobar)$ of degree $6$.
However, the question as to whether~$\AA_2(\rhobar)$ was actually rational was left open. We address
this question here.

\begin{theorem} Let~$\rhobar: G_{\Q} \rightarrow \GSp_4(\F_3)$ be a representation with cyclotomic similitude character.
	Suppose that the order of~$\im(\rhobar)$ is greater than~$96$. Then~$\AA_2(\rhobar)$ is not rational over~$\Q$.
\end{theorem}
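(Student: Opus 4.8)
The plan is to obstruct rationality by the non-vanishing of the (unramified) Brauer group, in the guise of Galois cohomology of the geometric Picard group. Fix a smooth proper model $X/\Q$ of $\AA_2(\rhobar)$. By the theory of Colliot-Th\'el\`ene and Sansuc, for $X$ smooth proper and geometrically rational the group $H^1(G_\Q,\Pic X_{\Qbar})$ is a $\Q$-stable-birational invariant; it vanishes for $\mathbf{P}^n_\Q$, and --- because $\AA_2(\rhobar)$ has a Zariski-dense set of $\Q$-points by Theorem~\ref{theorem:unirational}, so that $X(\Q)\neq\emptyset$ --- it coincides with $\mathrm{Br}(X)/\mathrm{Br}(\Q)$. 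Hence if $\AA_2(\rhobar)$ were rational, or merely stably rational, then $H^1(G_\Q,\Pic X_{\Qbar})=0$. (Note that local obstructions are unavailable here precisely because of Theorem~\ref{theorem:unirational}.) So everything comes down to showing $H^1(G_\Q,\Pic X_{\Qbar})\neq 0$ whenever $|\im(\rhobar)|>96$.

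For $X$ I would take the twist, by the cocycle attached to $\rhobar$, of a fixed $\mathrm{Aut}(\mathcal B)$-equivariant resolution $\widetilde{\mathcal B}\to\mathcal B$ of the Burkhardt quartic $\mathcal B$ over $\Q$ (e.g.\ the blow-up of its $45$ nodes, which is canonical, hence $\mathrm{Aut}$-equivariant and defined over $\Q$). Geometrically $X_{\Qbar}\cong\widetilde{\mathcal B}_{\Qbar}$, and $L:=\Pic\widetilde{\mathcal B}_{\Qbar}$ is the classical Picard lattice of the resolved Burkhardt quartic, spanned by the hyperplane class, the exceptional divisors over the $45$ nodes and the strict transforms of the $40$ special planes of $\mathcal B$, subject to the known incidence relations; $\mathrm{Aut}(\mathcal B)\cong\PGSp_4(\F_3)$ acts through its permutation actions on these configurations of $45$ and $40$ objects. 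The first concrete task is to write $L$ and this action down exactly. For the twist, the Galois action on $L$ factors as $G_\Q\xrightarrow{\rhobar}\GSp_4(\F_3)\twoheadrightarrow\PGSp_4(\F_3)\cong\mathrm{Aut}(\mathcal B)$, the second arrow having kernel the centre $\{\pm I\}$; here it matters that the $\Z/2$ by which $\PGSp_4(\F_3)$ extends $\PSp_4(\F_3)$ is ``arithmetic,'' cut out by the mod-$3$ cyclotomic character --- which is exactly why the cyclotomic-similitude hypothesis on $\rhobar$ is the natural one. Thus the $G_\Q$-action on $L$ factors through $\im(\rhobar)$, and by inflation $H^1(\im(\rhobar),L)\hookrightarrow H^1(G_\Q,\Pic X_{\Qbar})$ (the central kernel contributes nothing since $L$ is torsion-free). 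It therefore suffices to prove the purely group-theoretic assertion: $H^1(G,L)\neq 0$ for every subgroup $G\leq\GSp_4(\F_3)$ with $|G|>96$ whose similitude character maps onto $\F_3^{\times}$, where $G$ acts on $L$ via the same surjection $\GSp_4(\F_3)\twoheadrightarrow\PGSp_4(\F_3)$.

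This last statement is a finite computation. I would first cut $L$ down to size with a permutation (flasque) resolution in the style of Colliot-Th\'el\`ene and Sansuc: present $L$ via short exact sequences involving the permutation lattices $\Z[\text{nodes}]$ and $\Z[\text{planes}]$, whose cohomology over a subgroup $G$ is, by Shapiro's lemma, a direct sum of cohomology groups of point-stabilizers, thereby reducing $H^1(G,L)$ to explicit finite data; then enumerate the finitely many conjugacy classes of subgroups $G\leq\GSp_4(\F_3)$ of order $>96$ with surjective similitude and verify non-vanishing for each. I expect the main obstacle to lie exactly here: identifying the lattice $L$ together with the $\PGSp_4(\F_3)$-action with no error, confirming that the resolution $\widetilde{\mathcal B}$ really is defined over $\Q$ (so that no spurious arithmetic enters), and then controlling $H^1$ uniformly over the entire list of subgroups rather than for one $\rhobar$ at a time; the input from birational-invariance theory and the reduction above are, by contrast, routine. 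The threshold $96$ should emerge from the computation as precisely the point below which $H^1(G,L)$ is allowed to vanish, consistent with the trivial end of the family where $\AA_2(\rhobar)=\AA_2(3)$ is rational. As a bonus, for surjective $\rhobar$ one should find that $H^1(\PGSp_4(\F_3),L)\cong\mathrm{Br}(X)/\mathrm{Br}(\Q)$ contains classes of order $2$ and of order $3$, whereupon the transfer identity $f_* f^* = \deg f$ on Brauer groups rules out a rational cover of any degree in $\{2,3,4,5\}$ and recovers the sharp value $6$ of Theorem~\ref{theorem:intro}.
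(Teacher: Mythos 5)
Your general framework (geometric Picard lattice as a Galois module, birational invariance, reduction to a finite group-cohomology computation for the image of~$\rhobar$) is the same as the paper's, but the specific obstruction you propose does not work: the final verification step you defer to the computer would come out \emph{false}. You propose to show $H^1(G_\Q,\Pic X_{\Qbar})\neq 0$, i.e.\ non-vanishing of $H^1$ of the \emph{full} image of the Galois action on the lattice $L$. But by the paper's computation (the last two columns of the table), $H^1(H,M)$ and $H^1(H,M^{\vee})$ both vanish for $H=\PSp_4(\F_3)$ itself and for the majority of the large subgroups relevant to the hypothesis $|\im(\rhobar)|>96$ (e.g.\ the classes of order $96$, $192$, $216$, $288$, $576$, $648$, $720$, $960$, $25920$). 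Passing from the image over $E=\Q(\sqrt{-3})$ (inside $\PSp_4(\F_3)$) to the image over $\Q$ (inside $\PGSp_4(\F_3)$) only adds an index-$2$ layer, and by inflation--restriction the extra contribution is $H^1(\Z/2\Z, L^{H})$, which there is no reason to expect to be non-zero (for $\rhobar$ surjective it vanishes, since $L^{\PSp_4(\F_3)}$ is a rank-two lattice on which the outer involution acts trivially). So in the key cases, including the surjective one, your invariant $\mathrm{Br}(X)/\mathrm{Br}(\Q)\cong H^1(G_\Q,\Pic X_{\Qbar})$ is zero and obstructs nothing; your ``bonus'' claim that it contains classes of order $2$ and $3$ contradicts the computation.

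What the paper uses is strictly stronger than the Brauer obstruction over the ground field: Manin's criterion says that rationality forces $\Pic_{\Qbar}$ to be \emph{stably permutation}, hence (flasque and coflasque) one must have $H^1(P,M)=0$ \emph{and} $H^1(P,M^{\vee})=0$ for \emph{every} subgroup $P$ of the image --- equivalently, the $H^1$-obstruction must vanish after every finite extension of the base field and also for the dual lattice. That is where the non-vanishing actually lives: the classes of order $2$ and $3$ occur only on proper subgroups (e.g.\ the order-$3$ subgroup labelled $6$ and the order-$8$ subgroup labelled $31$), and often only for $M^{\vee}$. The deduction of the stated theorem is then: $|\im(\rhobar)|>96$ forces the projective image $H\subset\PSp_4(\F_3)$ of $\rhobar|_{G_E}$ to have order $>24$ (divide by $2$ for the similitude character and by at most $2$ for the center), and the table shows that every conjugacy class of subgroups of $\PSp_4(\F_3)$ of order $>24$ has a subgroup $P$ with $H^1(P,M)\neq 0$ or $H^1(P,M^{\vee})\neq 0$ (LCM column $>1$), whence non-rationality over $E$ and a fortiori over $\Q$. (Likewise, the sharp degree-$6$ statement comes from a pullback--pushforward argument applied to these subgroup-level $H^1$'s, not from a transfer on $\mathrm{Br}(X)/\mathrm{Br}(\Q)$, which is zero.) To repair your argument you would need to replace ``$H^1$ of the image is non-zero'' by the full stable-permutation test over all subgroups, together with the dual module (and, for some other results in the paper, the Burnside-cokernel obstruction as well).
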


More refined results can be extracted directly from the table in~\S\ref{sec:table}. Since~$\rhobar$ has cyclotomic similitude character, the restriction of~$\rhobar$ to~$G_E$, where~$E = \Q(\sqrt{-3})$, has image contained in~$\Sp_4(\F_3)$. If we let~$H$ denote the projection of~$\im(\rhobar|_{G_E})$ to the simple group~$\PSp_4(\F_3)$, then we prove that~$\AA_2(\rhobar)$ is not rational over~$\Q$ for all 
but~$26$
of the~$116$ conjugacy classes
of subgroups of~$\PSp_4(\F_3)$. With the exception of three
cases (including when~$H$ is trivial) where the methods of~\cite{Bruin}
may be applied (see~\S\ref{extra}),  we do not know what happens
in the 
remaining~$23$ cases, nor do we even know whether the rationality of~$\AA_2(\rhobar)$ depends only on~$\im(\rhobar)$ or not. One easy remark is that, for a quadratic character~$\chi$, there is an
isomorphism~$\AA_2(\rhobar) \simeq \AA_2(\rhobar \otimes \chi)$, and so the rationality of~$\AA_2(\rhobar)$ depends only
on the  image of~$\rhobar |_{G_E}$ in~$\PSp_4(\F_3)$.

The case of a surjective representation~$\rhobar$ is of special interest, since this is what happens generically for the three-torsion Galois representations of abelian surfaces.
\begin{theorem}  \label{theorem:intro} Suppose that~$\rhobar$ is surjective. Then~$\AA_2(\rhobar)$ is not rational over $\Q$,
and the minimal degree of any rational cover is~$6$.
\end{theorem}
In light of the result~\cite[Prop~10.2.3]{BCGP} mentioned above, the constant~$6$ is best possible in this case.

The key ingredient in our results is the explicit description of the cohomology of the compactified Siegel modular variety~$\AA_2^*(3)$ given in~\cite{HW61}. We use it to study the Galois module~$\Pic_{\Qbar}(\AA_2^*(\rhobar))$. The Galois action over~$E = \Q(\sqrt{-3})$ factors through the projectivization of~$\rhobar$ turning it into a $H$-module. We then calculate group cohomology of this module for various subgroups~$P \subset H$, and employ a necessary criterion for rationality (see Theorem~\ref{theorem:manin}) to deduce our results.

\subsection{Acknowledgments}

We thank Jason Starr and Yuri Tschinkel for discussions about rationality versus geometric rationality
for smooth varieties over number fields,  Steven Weintraub for a suggestion on how
to explicitly extract a description of~$H^2(\AA^*_2(3),\Z)$ as a~$G = \PSp_4(\F_3)$-module from
Theorem~4.9 of~\cite{HW61}, and Mark Watkins with help using \texttt{Magma}~\cite{magma}. We thank
the anonymous referees for useful comments and corrections, and we also thank Nils Bruin for explaining to us
many of the ideas in  section~\ref{extra}.

\section{Strategy}

The main idea behind the proof is to follow a strategy employed by Manin for cubic surfaces. Recall~\cite[\S A.1]{manin} that a continuous~$G_K$-module with the discrete topology
is called a \emph{permutation module} if it admits a finite free~$\Z$-basis on which~$G_K$ acts (via
a finite quotient) via permutations, and that two~$G_K$-modules~$M$ and~$N$ are \emph{similar}
if~$M \oplus  P \simeq N \oplus Q$ for some permutation modules~$P$ and~$Q$.
In particular, we employ the following theorem.

\begin{theorem}{\cite[\S A.1 Theorem~2]{manin}}  \label{theorem:manin} Let~$Z$ be a smooth projective algebraic variety over a number
field~$K$.
Suppose that~$Z$ is rational over~$K$. Then~$\Pic_{\Kbar} Z$ as a~$G_K$-module is stably permutation. In other words, it is similar to the zero 
module. 
\end{theorem}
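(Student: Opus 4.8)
The plan is to deduce the statement from the fact that the geometric Picard group of a smooth projective variety is a stable birational invariant \emph{up to permutation modules}, together with the trivial observation that $\Pic_{\Kbar} \mathbf{P}^n_K = \Z$ with \emph{trivial} $G_K$-action --- which is a permutation module, hence similar to~$0$. So suppose $Z$ is rational over~$K$, i.e.\ there is a birational map over~$K$ between~$Z$ and~$\mathbf{P}^n_K$. First I would resolve this map: take the closure~$\Gamma$ of its graph inside $Z \times_K \mathbf{P}^n_K$, a projective $K$-variety birational to both~$Z$ and~$\mathbf{P}^n_K$, and apply resolution of singularities to~$\Gamma$. Since we are in characteristic zero and everything is defined over~$K$, the resolution and the two projections are morphisms of \emph{smooth projective $K$-varieties}; this produces a smooth projective $K$-variety~$W$ together with birational morphisms $p \colon W \to Z$ and $q \colon W \to \mathbf{P}^n_K$.

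The heart of the matter is the behaviour of $\Pic_{\Kbar}$ under a birational morphism $f \colon W \to V$ of smooth projective $K$-varieties. Let $E \subseteq V$ be the image of the locus where $f$ is not an isomorphism. Since $V$ is smooth and~$f$ birational, a standard purity argument gives $\mathrm{codim}_V(E) \ge 2$ and an isomorphism $f^{-1}(V \setminus E) \xrightarrow{\ \sim\ } V \setminus E$; since~$V$ is smooth and~$E$ has codimension~$\ge 2$, restriction identifies $\Pic_{\Kbar}(V \setminus E)$ with $\Pic_{\Kbar} V$. Composing these gives a $G_K$-equivariant retraction $r \colon \Pic_{\Kbar} W \to \Pic_{\Kbar} V$ with $r \circ f^* = \mathrm{id}$, whose kernel~$N$ is generated by the classes $[E_i]$ of the $\Kbar$-prime divisors of~$W$ contracted by~$f$; hence $\Pic_{\Kbar} W = f^* \Pic_{\Kbar} V \oplus N$ as $G_K$-modules. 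The $[E_i]$ are moreover $\Z$-linearly independent: a relation $\sum n_i [E_i] = 0$ means $\sum n_i E_i = \mathrm{div}(g)$ for some rational function~$g$ on~$W$ whose restriction to $f^{-1}(V \setminus E) \cong V \setminus E$ is a unit, and by normality of~$V$ and $\mathrm{codim}_V(E) \ge 2$ such a unit extends to a global unit on the projective variety~$V$, that is, to a constant, so $\mathrm{div}(g) = 0$ and all $n_i = 0$. Thus~$N$ is the free $\Z$-module on the set of contracted $\Kbar$-prime divisors, with its evident permutation action of~$G_K$: a permutation module~$P$, and $\Pic_{\Kbar} W \cong \Pic_{\Kbar} V \oplus P$ as $G_K$-modules.

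Applying this to~$p$ and to~$q$ produces permutation modules $P_1, P_2$ with
$$\Pic_{\Kbar} Z \oplus P_1 \ \cong\ \Pic_{\Kbar} W \ \cong\ \Pic_{\Kbar} \mathbf{P}^n_K \oplus P_2 \ =\ \Z \oplus P_2 ,$$
and since the trivial module~$\Z$ is itself a permutation module, so is $\Z \oplus P_2$; this exhibits $\Pic_{\Kbar} Z$ as similar to~$0$. The only genuinely non-trivial inputs are resolution of singularities over~$K$ in the first step and the purity statement ``the non-isomorphism locus of a birational morphism of smooth varieties has image of codimension $\ge 2$'' in the second; the rest is bookkeeping with split short exact sequences of $G_K$-modules. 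I expect the part most in need of care to be checking that~$W$, $p$, $q$, and all the retractions above are defined, and equivariant, over~$K$ rather than merely over~$\Kbar$ --- it is precisely this that upgrades the mere freeness of each~$N$ to a \emph{permutation module} structure, which is what the argument needs.
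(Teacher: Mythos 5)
Your argument is correct and is essentially the classical proof of this result, which the paper does not reprove but simply cites from Manin's appendix: resolve the birational map $Z \dashrightarrow \mathbf{P}^n_K$ over $K$, and use that for a birational morphism of smooth projective $K$-varieties the geometric Picard group of the source splits $G_K$-equivariantly as the pullback of the target's plus the free (permutation) module on the contracted prime divisors. The points you flag as needing care (codimension $\ge 2$ of the non-isomorphism locus via Zariski's main theorem/purity, extension of units across codimension $\ge 2$ on a normal projective variety, and everything being defined over $K$ so that Galois permutes the exceptional divisors) are exactly the right ones, and your treatment of them is sound.
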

 
The Shimura variety~$\AA_2(3)$ admits a smooth toroidal projective compactification~$\AA^*_2(3)$, the (canonical) toroidal compactification constructed
by Igusa \cite{Igusa1967}. The automorphism group of~$\AA^*_2(3)$ over~$\Qbar$ is the group~$G = \PSp_4(\F_3)$,
the simple group of order~$25920$, which acts over the field~$E = \Q(\sqrt{-3})$.
It will be convenient from this point onwards to always work over the field~$E$. (Certainly rationality over~$\Q$ implies
rationality over~$E$, so non-rationality over~$E$ implies non-rationality over~$\Q$.)
This action on~$\AA_2(3)$ arises explicitly from the action of~$G$ on the~$3$-torsion~$A[3] = (\Z/3 \Z)^2
\oplus (\mu_3)^2 \simeq (\Z/3\Z)^4$ over~$E$.
We will apply Theorem~\ref{theorem:manin} to the corresponding
twist~$\AA^*_2(\rhobar)$.
We then make crucial use of very explicit description of
the cohomology of this compactification given by Hoffman and Weintraub \cite{HW61}. We
recall some facts from that paper here now.

\subsection{Picard group}
\label{Pic}
The Picard group of~$\AA^*_2(3)$ over~$\Qbar$ is a free~$\Z$-module of
rank~$61$. It is generated by two natural sets of classes. The first is a~$40$-dimensional
space explained by the~$40$ connected components of the boundary. The second is a~$45$-dimensional
space explained by divisors coming from Humbert surfaces. These are also
in one to one correspondence with the~$45$ nodes on the Burkhardt  quartic.
Together, these
generate the Picard group of~$\AA^*_2(3)$ over~$\Qbar$, which is free of rank~$61$.
Indeed, the Betti cohomology of~$\AA^*_2(3)$ over~$\Z$ is free of degrees~$1,0,61,0,61,0,1$ for~$i = 0,\ldots 6$ by~\cite[Theorem~1.1]{HW61}. 	
Furthermore, all of these classes are trivial under the action of~$G_{E}$.

Let $\rhobar: G_{\Q} \rightarrow \GSp_4(\F_3)$ be a continuous Galois representation with cyclotomic similitude character. The assumption on the similitude character implies that the restriction of $\rhobar$ to $E$
is valued in~$\Sp_4(\F_3)$. Let
$$\varrhobar: G_{E} \rightarrow G = \PSp_4(\F_3)$$
denote the projectivization of the representation~$\rhobar$ restricted to~$E$.
The group~$G$ acts over~$E$ on~$\AA^*_2(3)$ via automorphisms,
and~$\AA^*_2(\rhobar)$ is the twist of~$\AA^*_2(3)$ by~$\varrhobar$. 
The group~$\Pic_{\Qbar} \AA^*_2(\rhobar)$  as a~$G_E$-module is obtained
by considering~$\Pic_{\Qbar} \AA^*_2(3)$ as a~$G$-module and then obtaining
the Galois action via the map~$\varrhobar: G_{E} \rightarrow G$.
Thus it remains to closely examine~$\Pic_{\Qbar}(\AA^*_2(3))$ as a~$G$-module
over~$\Z$.
In fact, we can quickly prove a weaker version
of Theorem~\ref{theorem:intro} by studying this~$G$-module
over~$\Q$.
The group~$G$ admits a unique conjugacy class~$G_{45}$ of subgroups of index~$45$,
but two conjugacy classes of index~$40$; let~$G_{40}$ denote the (conjugacy class of)
subgroups which fix a point in the tautological action of~$G \subset \PGL_4(\F_3)$ on~$\mathbf{P}^3(\F_3)$.
The following is an easy consequence of the calculations of~\cite{HW61} (and is also confirmed by our
\texttt{Magma} code).

\begin{lemma} As~$\Q[G]$-modules, there is an 
	equality of virtual representations
$$H^2(\AA^*_2(3),\Q)  \simeq \Pic_{\Qbar}(\AA^*_2(3)) \otimes \Q =  \Q[G/G_{40}] + \Q[G/G_{45}]
- [\chi_{24}],$$
where~$\chi_{24} \otimes_{\Q} \C$ is the unique absolutely irreducible~$24$-dimensional representation of~$G$.
\end{lemma}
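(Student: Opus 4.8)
The plan is to reduce the displayed identity to a computation in the character table of~$G = \PSp_4(\F_3)$, fed by the explicit geometry of~$\AA^*_2(3)$ recorded by Hoffman and Weintraub.

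The first step is to pass from~$\Pic$ to~$H^2$. Since~$\AA^*_2(3)$ is smooth and projective with~$b_1 = 0$, the group~$\Pic_{\Qbar}\AA^*_2(3)$ is its N\'eron--Severi group, and the cycle class map
$$\Pic_{\Qbar}(\AA^*_2(3)) \otimes \Q \longrightarrow H^2(\AA^*_2(3),\Q)$$
is an injective~$G$-equivariant homomorphism of~$\Q[G]$-modules between spaces which both have dimension~$61$ (by~\cite[Theorem~1.1]{HW61} and the discussion above); hence it is an isomorphism, and it suffices to determine~$H^2(\AA^*_2(3),\Q)$ as a~$\Q[G]$-module. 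Sending the formal basis vectors attached to the~$40$ boundary divisor classes and to the~$45$ Humbert divisor classes (equivalently, the~$45$ nodes of the Burkhardt quartic) to those classes gives a~$G$-equivariant surjection
$$\Q[B] \oplus \Q[N] \twoheadrightarrow H^2(\AA^*_2(3),\Q)$$
whose kernel~$K$ has dimension~$40 + 45 - 61 = 24$. Here~$B$ is the~$G$-set of boundary components, which by Igusa's construction of~$\AA^*_2(3)$ (see~\cite{HW61}) is the set of~$40$ points of~$\mathbf{P}^3(\F_3)$ with the tautological action of~$\PSp_4(\F_3)$; by the definition of~$G_{40}$ this identifies~$\Q[B]$ with~$\Q[G/G_{40}]$. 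The set~$N$ carries a transitive~$G$-action with point stabiliser of order~$25920/45 = 576$, so~$\Q[N] \simeq \Q[G/G_{45}]$ because~$G$ has a unique conjugacy class of subgroups of index~$45$. Consequently, as virtual representations,
$$[H^2(\AA^*_2(3),\Q)] = [\Q[G/G_{40}]] + [\Q[G/G_{45}]] - [K],$$
and the statement is reduced to the assertion that~$K \simeq \chi_{24}$.

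To pin down~$K$ I would compute the three characters involved. The permutation action on~$\mathbf{P}^3(\F_3)$ has rank~$3$ (two distinct points span either an isotropic or a non-degenerate line, and~$\Sp_4(\F_3)$ is transitive on each type by Witt's theorem), so decomposing against the character table of~$\PSp_4(\F_3)$ gives~$\Q[G/G_{40}] \simeq \mathbf{1} \oplus \chi_{15} \oplus \chi_{24}$ for a suitable rational irreducible of degree~$15$; a similar computation gives~$\Q[G/G_{45}] \simeq \mathbf{1} \oplus \chi_{20} \oplus \chi_{24}$. On the other hand, the~$\Q[G]$-module structure of~$H^2(\AA^*_2(3),\Q)$ can be extracted from~\cite[Theorem~4.9]{HW61}: one finds~$H^2(\AA^*_2(3),\Q) \simeq \mathbf{1}^{\oplus 2} \oplus \chi_{15} \oplus \chi_{20} \oplus \chi_{24}$ (the two trivial summands being spanned, for instance, by the Hodge class and the total boundary class). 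Substituting into the virtual identity above, and using that~$\chi_{24}$ is the unique irreducible of its dimension, forces~$K \simeq \chi_{24}$. As an independent check one can instead compute the character of~$H^2$ from the fixed-point loci of the elements of~$G$ acting on~$\AA^*_2(3)$ via the Lefschetz fixed-point formula, using the known Betti numbers~$1,0,61,0,61,0,1$ and Poincar\'e duality to reduce to a single rational character; this is the computation carried out by our \texttt{magma} code.

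The argument is essentially bookkeeping, and I do not expect any step to present a real difficulty. The two points that require care are: choosing, on the basis of~\cite{HW61}, the correct one of the \emph{two} conjugacy classes of index-$40$ subgroups to serve as the stabiliser for~$B$; and faithfully translating the combinatorial and topological content of Hoffman--Weintraub's Theorem~4.9 into~$\Q[G]$-module language. What makes the conclusion worth isolating --- and what is exploited in the rest of the paper --- is that the discrepancy between the two permutation modules and~$H^2$ is a \emph{single} absolutely irreducible representation.
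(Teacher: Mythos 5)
Your argument is correct and is essentially the paper's own route: the lemma is presented there as an easy consequence of the Hoffman--Weintraub calculations (checked by \texttt{magma}), and your surjection $\Q[G/G_{40}]\oplus\Q[G/G_{45}]\twoheadrightarrow H^2(\AA^*_2(3),\Q)$ with $24$-dimensional kernel, together with the rank-$3$ permutation character decompositions $1+\chi_{15}+\chi_{24}$ and $1+\chi_{20}+\chi_{24}$, is exactly that bookkeeping. One small simplification: since the kernel is a $24$-dimensional subrepresentation of $\mathbf{1}^{\oplus 2}\oplus\chi_{15}\oplus\chi_{20}\oplus\chi_{24}^{\oplus 2}$ and no sub-multiset of the degrees $\{1,1,15,20\}$ sums to $24$, dimension count alone forces the kernel to be $\chi_{24}$, so you do not actually need to extract the full $\Q[G]$-module structure of $H^2$ from Theorem~4.9 of \cite{HW61}.
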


Now, assuming that~$\varrhobar$ is \emph{surjective}, we can prove that~$\AA^*_2(\rhobar)$
is not rational simply by proving that~$\chi_{24}$ is not virtually equal to
a sum of permutation representations. If $R_{\Q}(G)$ denotes the representation ring of $G$, this is equivalent to proving that~$\chi_{24} \in  R_{\Q}(G)$
does not lie in the 
Burnside subring 
generated by permutation representations. But one may compute 
(using \texttt{Magma} or otherwise) that the Burnside cokernel of~$G$ has order~$2$ and is generated
by~$\chi_{24}$. 
This proves a weaker version of Theorem~\ref{theorem:intro} showing that any rational cover of~$\AA_2(\rhobar)$ should have degree at least $2$, although it is softer in that it only needs the~$\Q[G]$-representation
rather than the~$\Z[G]$-module.
This argument also applies if one only assumes that the image of~$\varrhobar$ is~$H \subset G$, as long as the restriction of~$\chi_{24}$ to~$H$  is still non-trivial in the Burnside cokernel,
which it is for precisely~$8$
of the~$116$ conjugacy classes of subgroups of~$G$.

\subsection{Cohomological Obstructions}
\label{cohomological_obstruction}
From now on, we let $H$ denote the image of~$\varrhobar: G_{E} \rightarrow G = \PSp_4(\F_3)$.
A second way to prove that a 
Galois module is not similar to the zero 
 module
is to use cohomology. If~$M$ is a permutation 
 module of~$H$,  then 
the restriction of~$M$ to any subgroup~$P$ is also a permutation 
 module,
and thus a direct sum of~$P$-modules
of the form~$\Z[P/Q]$ for subgroups~$Q$ of~$P$.
(Note that since a permutation module of a group~$G$ arises from a finite~$G$-set, it always decomposes over~$\Z$ into a direct sum of such irreducible permutation modules.)
Then, Shapiro's Lemma implies that~$H^1(P,M)$ is a direct sum of groups of the form
$$H^1(P,\Z[P/Q]) = H^1(Q,\Z) = 0,$$
where the second group vanishes because~$Q$ is finite. Moreover, the~$\Z$-dual~$M^{\vee} = \Hom(M,\Z)$
of a permutation 
 module is isomorphic to the same permutation 
 module
(a permutation matrix is its own inverse transpose).
Thus one immediately has the following elementary criterion.

\begin{lemma}[Cohomological Criterion for non-rationality] \label{lemma:crit} Let~$M$ denote the~$G$-module $\Pic_{\Qbar}(\AA^*_2(3))$. Suppose~$\AA^*_2(\rhobar)$ is rational over~$E = \Q(\sqrt{-3})$, 
and~$\varrhobar |_{G_E}$ has image~$H \subset G$. Then
$$H^1(P,M^{\vee}) = H^1(P,M) = 0$$
for every subgroup~$P \subset H$.
\end{lemma}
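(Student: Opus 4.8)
The plan is to obtain the lemma as a purely formal consequence of Manin's Theorem~\ref{theorem:manin} together with the description of $\Pic_{\Qbar}(\AA^*_2(\rhobar))$ recalled above. First I would note that $\AA^*_2(\rhobar)$ is smooth and projective over $E$, being a twist, by an action defined over $E$, of the smooth projective Igusa compactification $\AA^*_2(3)$. Hence, if $\AA^*_2(\rhobar)$ is rational over $E$, Theorem~\ref{theorem:manin} applies and tells us that $\Pic_{\Qbar}(\AA^*_2(\rhobar))$ is similar to the zero representation as a $G_E$-module; that is, there are permutation $G_E$-modules $P_0$ and $Q_0$ with $\Pic_{\Qbar}(\AA^*_2(\rhobar)) \oplus P_0 \simeq Q_0$. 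By the discussion preceding the lemma, $\Pic_{\Qbar}(\AA^*_2(\rhobar))$ is, as a $G_E$-module, precisely $M = \Pic_{\Qbar}(\AA^*_2(3))$ with $G_E$ acting through $\varrhobar \colon G_E \twoheadrightarrow H \subseteq G$.

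Next I would reduce to a finite group. All of $M$, $P_0$, $Q_0$ have $G_E$-action factoring through a common finite Galois quotient $\Gamma = \Gal(L/E)$ of $G_E$, through which $\varrhobar$ induces a surjection $\pi \colon \Gamma \twoheadrightarrow H$; so the isomorphism $M \oplus P_0 \simeq Q_0$ holds already in the category of $\Gamma$-modules, with $P_0, Q_0$ permutation $\Gamma$-modules and the action on $M$ factoring through $\pi$. Dualizing over $\Z$ and using that a permutation lattice is isomorphic to its own $\Z$-dual, the same relation holds with $M$ replaced by $M^{\vee}$. It therefore suffices to prove: if a $\Gamma$-module $N$ with action factoring through $\pi$ satisfies $N \oplus P_0 \simeq Q_0$ for permutation $\Gamma$-modules $P_0, Q_0$, then $H^1(P, N) = 0$ for every subgroup $P \subseteq H$.

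For this last point --- the only actual computation, and already sketched in the excerpt --- I would fix $P \subseteq H$, let $\Gamma_P = \pi^{-1}(P)$ with kernel $\Delta = \ker \pi$, so $\Gamma_P/\Delta \cong P$, and combine two elementary facts. Since $\Delta$ acts trivially on $N$, the inflation--restriction exact sequence gives an injection $H^1(P, N) \hookrightarrow H^1(\Gamma_P, N)$. On the other hand, restricting $N \oplus P_0 \simeq Q_0$ to $\Gamma_P$ and applying $H^1(\Gamma_P, -)$: the restriction of a permutation $\Gamma$-module to $\Gamma_P$ is a direct sum of modules $\Z[\Gamma_P/R]$, and by Shapiro's lemma $H^1(\Gamma_P, \Z[\Gamma_P/R]) = H^1(R, \Z) = 0$ since $R$ is finite, forcing $H^1(\Gamma_P, N) = 0$ and hence $H^1(P, N) = 0$. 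Applying this to $N = M$ and $N = M^{\vee}$ completes the argument.

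I do not expect a genuine obstacle in proving Lemma~\ref{lemma:crit} itself: it is a soft repackaging of Manin's obstruction for a twist, and the needed ingredients (self-duality of permutation lattices, the Shapiro computation, and the identification of the twisted Picard module) are already in place. The one place to take care is the bookkeeping in passing between the profinite group $G_E$ and the finite group $H$ --- choosing $\Gamma$ and checking injectivity of inflation in degree one --- so that the conclusion is phrased, as stated, over all subgroups $P \subseteq H$. The real work of the paper lies not here but in the \emph{application} of this criterion: extracting from the Hoffman--Weintraub description of $M$ as a $\Z[G]$-module, for each relevant image $H$, an explicit subgroup $P$ with $H^1(P, M) \neq 0$ or $H^1(P, M^{\vee}) \neq 0$.
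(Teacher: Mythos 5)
Your proposal is correct and follows essentially the same route as the paper, which deduces the lemma from Theorem~\ref{theorem:manin} together with the Shapiro-lemma computation $H^1(P,\Z[P/Q]) = H^1(Q,\Z) = 0$ and the self-duality of permutation lattices, exactly as you do. Your explicit inflation--restriction bookkeeping for passing from $G_E$ to subgroups of the finite image $H$ is a worthwhile clarification of a step the paper leaves implicit, but it is not a different argument.
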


We note that this is not an ``if and only if'' criterion. In the language of~\cite{sansuc},
the lemma is saying that~$M$ as a~$G_E$-module is \emph{flasque} and \emph{coflasque} respectively. In general, this is weaker than being stably permutation
 (which itself is not enough to formally imply rationality).

In order to test this criterion in practice, we need an explicit description of~$M$ as a~$\Z[G]$-module
rather than a~$\Q[G]$-module. In order to do this, we explain how an explicit description of~$M$
can be extracted from 
Theorem~4.9 of~\cite{HW61}. That theorem describes a set of elements which generate
both~$H_4(\AA^*_2(3),\Z)$ and~$H^2(\AA^*_2(3),\Z)$, and explicitly gives the intersection pairing between them.
Moreover, the basis comes with a transparent action of the group~$G$. Specifically,
$H^2(\AA^*_2(3),\Z)$ is given as a quotient of~$\Z[G/G_{40}] \oplus \Z[G/G_{45}]$.
Hence to compute~$H^2(\AA^*_2(3),\Z)$ as a~$G$-module, it suffices to compute the quotient
of~$\Z[G/G_{40}] \oplus \Z[G/G_{45}]$ by the saturated subspace which pairs trivially
with all elements of~$H_4(\AA^*_2(3),\Z)$.
Having carried out this computation, we obtain a 
free abelian group 
of rank~$61$ with an explicit action of~$G$. We then do the following for every
conjugacy class of subgroups~$H \subset G$.
\begin{enumerate}
\item Determine whether~$\chi_{24}$ is non-trivial in the Burnside cokernel of~$H$.
\item Determine whether~$H^1(P,M) \ne 0$ for any subgroup~$P \subset H$.
\item Determine whether~$H^1(P,M^{\vee}) \ne 0$ for any subgroup~$P \subset H$.
\end{enumerate}
If any of these is non-trivial, this proves that~$\AA^*_2(\rhobar)$ is not rational.
Moreover, the computation of these cohomology groups allows us to deduce our result about the minimal degree of any rational covering.

\begin{lemma}
	\label{pullback-pushforward}
	Let~$M$ denote the~$G$-module $\Pic_{\Qbar}(\AA^*_2(3))$. Suppose~$\varrhobar |_{G_E}$ has image~$H \subset G$. Let~$n$ denote the least common multiple of the exponents of~$H^1(P,M)$ and~$H^1(P,M^{\vee})$ as $P$ varies over all subgroups of~$H$. Suppose~$f : X \rightarrow \AA^*_2(\rhobar)$ is a rational cover of degree~$d$
	defined over~$\Q$. Then $n$ divides $d$.
\end{lemma}
\begin{proof}
The induced pullback map $f^* : \Pic_{\Qbar}(\AA^*_2(\rhobar)) \rightarrow \Pic_{\Qbar}(X)$ and pushforward map $f_* : \Pic_{\Qbar}(X) \rightarrow \Pic_{\Qbar}(\AA^*_2(\rhobar))$ are Galois equivariant since $f$ is
defined over~$\Q$. The composite map~$g = f_* \circ f^*$ on $\Pic_{\Qbar}(\AA^*_2(\rhobar))$ is multiplication by $d$. The discussion in~\S\ref{Pic} shows that the~$G_E$-module $\Pic_{\Qbar}(\AA^*_2(\rhobar))$ can be thought of as the~$H$-module $M$.

	By Theorem~\ref{theorem:manin}, we know that~$\Pic_{\Qbar}(X)$ is stably permutation as a Galois module and hence the Galois cohomology group~$H^1(G_{\Q},\Pic_{\Qbar}(X)) = 0$. Therefore, the maps induced by~$g$ on the cohomology groups~$H^1(P,M)$ and~$H^1(P,M^{\vee})$ are the zero maps for every subgroup~$P \subset H$. Since the map~$g$ is multiplication by~$d$, the induced map on cohomology is also multiplication by~$d$, and hence we deduce that the exponent of each of these cohomology groups divides~$d$.
\end{proof}

We give one final statement which can be extracted from \texttt{Magma} using the code given in~\cite{code}, but not directly from the table. In order to represent elements of~$G = \PSp_4(\F_3)$ by
matrices, we follow the conventions of \texttt{Magma} by fixing~$\Sp_4(\F_3) \subset \GL_4(\F_3)$ to be the matrices preserving the symplectic form
$$J=\left( \begin{matrix} 0 & 0 & 0 & 1 \\ 0 & 0 & 1 & 0 \\ 0  & -1 & 0 & 0 \\-1 & 0 & 0 & 0 \end{matrix} \right).$$

\begin{lemma} 
Suppose that the image of~$\rhobar$ contains an element conjugate in~$\PSp_4(\F_3)$ to
$$\left( \begin{matrix} 1 & 0 & 0 & -1 \\ 0 & 1 & 1 & 0 \\ 0 & 0 & 1 & 0 \\ 0 & 0 & 0 & 1 \end{matrix} \right).$$
Then~$\AA_2(\rhobar)$ is not rational,
and the minimal degree of any rational cover is divisible by~$3$.
\end{lemma}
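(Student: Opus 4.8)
The plan is to apply the cohomological criterion of Lemma~\ref{lemma:crit} to the cyclic subgroup generated by (the image of) the displayed element~$g$. First I would note that~$g$ is unipotent of order~$3$: writing~$g = 1 + N$ one checks~$N^{2} = 0$, so~$g^{3} = 1 + 3N = 1$ over~$\F_{3}$ while~$g \neq 1$. Since~$3$ is odd, $g$ meets the order-$2$ centre of~$\Sp_{4}(\F_{3})$ trivially and its image~$\bar g$ in~$\PSp_{4}(\F_{3})$ again has order~$3$; and since~$3$ is prime to~$[G_{\Q}:G_{E}] = 2$, an element of~$\im(\rhobar)$ whose image in~$\PGSp_{4}(\F_{3})$ is~$\PSp_{4}(\F_{3})$-conjugate to~$\bar g$ has that image already lying in~$H = \im(\varrhobar) \subseteq G = \PSp_{4}(\F_{3})$. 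Thus~$H$ contains a cyclic subgroup~$P$ of order~$3$ that is~$G$-conjugate to~$\langle \bar g\rangle$; since~$H^{1}(P,-)$ depends only on the~$G$-conjugacy class of~$P$, and since~$\AA_{2}(\rhobar)$ is rational over~$E$ if and only if its smooth projective model~$\AA^{*}_{2}(\rhobar)$ is, it suffices by Lemma~\ref{lemma:crit} to show that~$H^{1}(\langle\bar g\rangle, M) \neq 0$, where~$M = \Pic_{\Qbar}(\AA^{*}_{2}(3))$ is the rank-$61$ lattice with its~$G$-action.

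For this I would use the explicit model of~$M$ coming from Theorem~4.9 of~\cite{HW61}, namely~$M$ realised as an explicit~$G$-stable quotient of~$\Z[G/G_{40}] \oplus \Z[G/G_{45}]$ by a saturated subgroup --- precisely the data produced by the accompanying \texttt{magma} file. Choosing a permutation representative of~$\bar g$ on the ambient basis, one computes, for the cyclic group~$P = \langle \bar g\rangle$ of order~$3$,
$$H^{1}(P, M) \;=\; \ker\big(1 + \bar g + \bar g^{2}\colon M \to M\big)\,\big/\,\im\big(\bar g - 1\colon M \to M\big),$$
a finite abelian group annihilated by~$3$. The substance of the lemma is then the integral linear-algebra computation showing that this quotient is nonzero, whence its exponent equals~$3$. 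By Lemma~\ref{lemma:crit}, $\AA^{*}_{2}(\rhobar)$ --- and hence~$\AA_{2}(\rhobar)$ --- is not rational over~$E$, a fortiori not over~$\Q$; and by the pullback--pushforward remark following that lemma, the minimal degree of any rational cover is divisible by the exponent of~$H^{1}(P,M)$, i.e.\ by~$3$.

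The main obstacle is exactly the explicit step in the previous paragraph, which is also why the statement is read off from the \texttt{magma} file rather than from the table of~\S\ref{sec:table}. Two points require care: (i) one must identify the~$\PSp_{4}(\F_{3})$-conjugacy class of the displayed matrix among the classes of order-$3$ elements --- there is more than one such class, and~$H^{1}(\langle g\rangle, M)$ is nontrivial only for some of them, so the particular matrix matters; and (ii) one must actually carry out the kernel/image computation for~$1 + \bar g + \bar g^{2}$ and~$\bar g - 1$ on the~$61$-dimensional lattice~$M$ (e.g.\ via Smith normal form). The table records, for each conjugacy class of \emph{subgroup}~$H \subseteq G$, the outcome of tests~(1)--(3) above, whereas here the obstruction is already visible on the cyclic subgroup generated by a single conjugacy class of \emph{element}, which is data of a different kind.
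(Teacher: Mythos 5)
Your proposal is correct and follows essentially the same route as the paper: the paper's proof simply identifies the cyclic group generated by this order-$3$ element with the conjugacy class labelled subgroup~$6$ in the table (for which $H^1(P,M) = H^1(P,M^{\vee}) = \Z/3\Z$, a fact verified by the accompanying \texttt{magma} computation, exactly the integral linear-algebra step you describe) and then applies Lemma~\ref{lemma:crit} together with the pullback--pushforward divisibility remark. Your additional care in passing from $\im(\rhobar)$ to $H = \im(\varrhobar|_{G_E})$ via the order of the element being prime to $2$, and in noting that not all order-$3$ classes give a nonzero obstruction, is a correct filling-in of details the paper leaves implicit.
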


\begin{proof}
It suffices to note that this element generates the subgroup labelled as subgroup~$6$ in the table below,
and then to apply 
Lemma~\ref{pullback-pushforward}.
\end{proof}

\subsection{Other cases where rationality can be established} \label{extra}
The analysis of Baker's parametrization~\cite{Baker} undertaken in~\cite[\S4]{Bruin} allows
one to deduce the rationality of certain twists of
the Burkhardt quartic~$B$ (and hence of~$\AA_2(\rhobar)$) in a few more cases. (We thank
Nils Bruin for pointing this out to us, as well as explaining the geometric construction below.)
The rational parametrization~$\mathbf{P}^3 \dashrightarrow B$  over~$\Q$ constructed in~\cite{Bruin}
is not equivariant
with respect the action of~$\PSp_4(\F_3)$. If it were, then the twists~$\AA_2(\rhobar)$ we are considering
would all be birational to Brauer--Severi varieties. However,
because they are also unirational over~$\Q$ by~\cite[Prop~10.2.3]{BCGP},  they would be rational over~$\Q$,
which we prove in this paper to be false in general. 
On the other hand, the parametrization~$\mathbf{P}^3 \dashrightarrow B$
\emph{is} equivariant with respect to the (unique up to conjugacy) cyclic group of order~$9$~\cite[\S4.3]{Bruin},
and also with respect to the corresponding group scheme over~$\Q$ whose~$E$ points are this
group of order~$9$ (c.f. \cite[\S2.3]{CCR}), which controls the descent from~$E$ to~$\Q$. 
In particular, the same argument implies that~$\AA_2(\rhobar)$ is rational in two further cases, namely, the subgroups labelled~$n = 4$ (of order~$3$)
and~$n = 24$ (of order~$9$) in the table below.  One can also arrive at this rational parametrization
more geometrically, following~\cite[\S4]{Bruin}, whose notation we now freely follow.
The variety of lines~$L_{J_1,J_2,J_3}$ incident with~$3$-distinct planes~$J_i \subset \mathbf{P}^4$
is geometrically rational. If these planes are mutually skew and lie on~$B$, there is a 
dominant map~$L_{J_1,J_2,J_3} \dashrightarrow B$
defined by noting that a line  will  generically intersect~$B$ in four points and each~$J_i$ in one point, and hence
one can send the line to the fourth point of intersection with~$B$. There are~$40$ Jacobi planes~$J_i$ on~$B$, and~$2880$ triples of mutually skew such planes. 
The stabilizer under~$\PSp_4(\F_3)$ on these~$2880$ triples is the
cyclic group of order~$9$.  The assumption that~$H$ is  contained inside this group
then implies that
 there exists a triple of~$\Gal(\Qbar/\Q)$-invariant 
 mutually skew planes on
the twist of~$B$ corresponding to~$\rhobar$.
The result then follows after noting that~$L_{J_1,J_2,J_3}$ is rational over~$\Q$ whenever
this triple is defined over~$\Q$. (We omit a direct proof of this last claim in light of the alternate
argument given above.)

\section{Computation} \label{sec:table}
Let~$M$ denote the~$G$-module $\Pic_{\Qbar}(\AA^*_2(3)) \simeq H^2(\AA^*_2(3),\Z)$.
 We have, by Poincar\'{e} duality, an isomorphism~$M^{\vee} = H^4(\AA^*_2(3),\Z)$.
Below we present in a table the result of our computation for all~$116$ conjugacy classes of subgroups~$H \subset G$, indicating the following data:
\begin{enumerate}
\item  An ordering~$n = 1 \ldots 116$ of the conjugacy class of the subgroup~$H$ as determined by \texttt{Magma}.
\item The group~$H$ in the small groups database~\cite{Small}. The first element of the pair
gives the order of~$H$.
\item The
order  of $M$ in the Burnside cokernel of $H$ over~$\Q$
(if it is non-trivial). If this is greater than~$1$, then the corresponding twist is not rational over~$E$ (or~$\Q$).
\item The least common multiple of the exponents of~$H^1(P,M)$ and~$H^1(P,M^{\vee})$ as~$P$ ranges over subgroups~$P \subset H$.
If this is greater than~$1$, then the corresponding twist is not rational over~$E$ (or~$\Q$).
In particular, the fact that this number is~$6$ for~$G$ itself proves Theorem~\ref{theorem:intro}.
\item The pre-image of~$H$ in $\Sp_4(\F_3)$ acts on~$\F^4_3$. Is this action absolutely irreducible?
(That is, is the action on~$\Fbar^4_3$ irreducible.)
\item A list of the  conjugacy class of maximal subgroups of~$H$ (as indexed in the table). This
allows one to compute the LCM column directly. 
The table is separated into blocks to reflect the geometry of the corresponding poset of subgroups. In particular, all maximal subgroups of~$H$ occur in blocks before that of $H$. 
\item The last two columns give~$H^1(H,M)$ and~$H^1(H,M^{\vee})$.
\end{enumerate}
One must be careful while reading the table because the ordering of the conjugacy classes of subgroups is not canonical. The Small Group tag and the indices of the maximal subgroups given in the second and sixth columns of the table \emph{do}, however, determine the ordering uniquely  once  we distinguish between the conjugacy classes indexed by~$n = 2,3$,~$n = 4,5,6$,~$n = 9,11$, and~$n = 10,12$. This can be done by considering the length of each of these conjugacy classes (i.e., the number of subgroups in each conjugacy class) as shown in the following table.
\begin{center}
\begin{tabular}{|c|c|}
	\hline
	$n$ & Length\\
	\hline
	2 & 45\\
	3 & 270\\
	\hline
	4 & 40\\
	5 & 120\\
	6 & 240\\
	\hline
\end{tabular}
\quad
\begin{tabular}{|c|c|}
	\hline
	$n$ & Length\\
	\hline
	9 & 270\\
	11 & 405\\
	\hline
	10 & 270\\
	12 & 540\\
	\hline
\end{tabular}
\end{center}

The \texttt{Magma} code available at~\cite{code} computes~$G$ and~$M$ directly
from the description given by Hoffman and Weintraub~\cite{HW61}.
This leads to a representation of~$G$ as generated by two 
sparse ~$61 \times 61$ matrices~$x$ and~$y$ in~$\mathrm{GL}_{61}(\Z)$ such that the
underlying module on which~$G$ acts (on the right, by \texttt{Magma} conventions) is~$M$. 
The matrices~$x$ and~$y$ are also printed in the output file of our \texttt{Magma} script. 

 \begin{longtable}{|c|c|c|c|c|c|c|c|}
\hline
$n$ &  SmallGroup & B & LCM &  irred
 & maximal subgroups & $H^1(M)$  &  $H^1(M^{\vee})$ \\
\hline
$1$ & \texttt{<1,1>}  &  & $1$ & no &  & & \\
\hline
$2$ &  \texttt{<2,1>} &       & $1$  & no & 1  & & \\
$3$ &  \texttt{<2,1>} &       & $1$   & no & 1   & & \\
$4$ & \texttt{<3,1>} &       & $1$   & no & 1   & & \\
$5$ & \texttt{<3,1>} &       & $1$   & no  & 1  & & \\
$6$ & \texttt{<3,1>} &       & $3$   & no& 1   & $\Z/3\Z$ & $\Z/3\Z$ \\
$7$ &  \texttt{<5,1>} &       & $1$   & no & 1  & & \\
\hline
$8$ &  \texttt{<4,1>} &       & $1$   & no & 2  & & \\
$9$ &  \texttt{<4,2>} &       & $1$   & no &  2 3  & & \\
$10$ & \texttt{<4,2>} &       & $2$   & no  & 3  & &  $(\Z/2\Z)^2$ \\
$11$ &  \texttt{<4,2>} &       & $2$   & no & 2 3  & &   $\Z/2\Z$ \\
$12$ &  \texttt{<4,2>} &       & $1$   & no & 3  & & \\
$13$ & \texttt{<4,1>} &       & $1$   & no & 3  & & \\
$14$ & \texttt{<6,1>} &       & $3$   & no & 2  6  & $\Z/3\Z$ & \\
$15$ & \texttt{<6,2>} &       & $1$   & no & 2  4   & & \\
$16$ &  \texttt{<6,2>} &       & $3$   & no & 2 6  & & \\
$17$ & \texttt{<6,1>} &       & $3$   & no  & 3  6  & &  $\Z/3\Z$ \\
$18$ &  \texttt{<6,1>} &       & $1$   & no & 3 5   & & \\
$19$ & \texttt{<6,2>} &       & $1$   & no  & 2 5 & & \\
$20$ & \texttt{<6,2>} &       & $1$   & no  &  3 5   & & \\
$21$ & \texttt{<9,2>} &       & $3$   & no &  5 6   & &  $(\Z/3\Z)^2$ \\
$22$ &  \texttt{<9,2>} &       & $3$   & no & 4  6   & &  $(\Z/3\Z)^2$  \\
$23$ &  \texttt{<9,2>} &       & $3$   & no & 4  5  6   & & \\
$24$ &  \texttt{<9,1>} &       & $1$   & no & 4  & & \\
$25$ &  \texttt{<10,1>} &       & $1$   & no & 3 7    & & \\
\hline
$26$ &  \texttt{<8,4>} &       & $1$   & no  & 8  & & \\
$27$ &  \texttt{<8,5>} &       & $2$   & no  & 11 12   & &  $(\Z/2\Z)^2$ \\
$28$ &  \texttt{<8,5>} &       & $2$   & no & 10 11   & & $(\Z/2\Z)^2$ \\
$29$ &  \texttt{<8,5>} &       & $2$   & no  & 9 10 11  & & \\
$30$ &  \texttt{<8,2>} &       & $2$   & no & 8 11   & & \\
$31$ &  \texttt{<8,2>} &       & $2$   & no &  11 13   & $\Z/2\Z$ &  $\Z/2\Z$ \\
$32$ &  \texttt{<8,3>} &       & $2$   & no & 8 11   &  &  $\Z/2\Z$ \\
$33$ &  \texttt{<8,3>} &       & $2$   & no & 10 12 13  & & $\Z/2\Z$   \\
$34$ &  \texttt{<8,3>} &       & $1$   & no &  9 12 13  & & \\
$35$ &  \texttt{<12,3>} &       & $2$   & no & 5 10   & & \\
$36$ &  \texttt{<12,3>} &       & $3$   & no & 6 12   & $\Z/3\Z$ &  $\Z/3\Z$ \\
$37$ &  \texttt{<12,4>} &       & $3$   & no & 9 14 16 17   & & \\
$38$ &  \texttt{<12,5>} &       & $1$   & no &  9 19 20   & & \\
$39$ &  \texttt{<12,1>} &       & $1$   & no & 13 20  & & \\
$40$ &  \texttt{<12,2>} &       & $1$   & no &  8 15  & & \\
$41$ &  \texttt{<12,4>} &       & $1$   & no &  12 18 20   & & \\
$42$ &  \texttt{<18,4>} &       & $3$   & no & 17 18 21  & & $(\Z/3\Z)^2$  \\
$43$ &  \texttt{<18,3>} &       & $3$   & no & 14 16 21  & & \\
$44$ &  \texttt{<18,3>} &       & $3$   & no &14 19 23   & & \\
$45$ &  \texttt{<18,3>} &       & $3$   & no & 14 15 22   & & \\
$46$ &  \texttt{<18,3>} &       & $3$   & no & 18 20 21   & & $\Z/3\Z$ \\
$47$ &  \texttt{<18,3>} &       & $3$   & no & 17 20 23   & & \\
$48$ &  \texttt{<18,5>} &       & $3$   & no & 15 16 19 23   & & \\
$49$ &  \texttt{<20,3>} &       & $1$   & yes &   13 25  & & \\
$50$ &  \texttt{<27,5>} &       & $3$   & no & 21 22 23  & & $\Z/3\Z$ \\
$51$ &  \texttt{<27,3>} &       & $3$   & no & 22   & & $(\Z/3\Z)^2$  \\
$52$ &  \texttt{<27,4>} &       & $3$   & no & 22 24  & &  $\Z/3\Z$  \\
\hline
$53$ &  \texttt{<16,14>} &       & $2$   & yes & 28 29   & & \\
$54$ &  \texttt{<16,13>} &       & $2$   & no & 26 30 32   & & \\
$55$ &  \texttt{<16,11>} &       & $2$   & yes & 27 28 30 32   & & $\Z/2\Z$  \\
$56$ &  \texttt{<16,3>} &       & $2$   & no & 28  31  & &  $(\Z/2\Z)^2$  \\
$57$ &  \texttt{<16,11>} &       & $2$   & yes &  27 29 31 33 34   & & $\Z/2\Z$  \\
$58$ &  \texttt{<16,3>} &       & $2$   & no &  29 30 31   & & \\
$59$ &  \texttt{<24,3>} &       & $1$   & no & 15 26   & & \\
$60$ &  \texttt{<24,13>} &       & $2$   & no & 20 29 35   & & \\
$61$ &  \texttt{<24,3>} &       & $3$   & no & 16 26   & & \\
$62$ &  \texttt{<24,3>} &       & $1$   & no &  19 26  & & \\
$63$ &  \texttt{<24,11>} &   $2$   & $1$   & no & 26 40   & & \\
$64$ &  \texttt{<24,13>} &       & $2$   & no &  19 28 35   & & \\
$65$ &  \texttt{<24,13>} &       & $6$   & no &   16 27 36  & & \\
$66$ &  \texttt{<24,12>} &       & $2$   & no & 18 33 35   & & \\
$67$ &  \texttt{<24,12>} &       & $6$   & no &  17 33 36   & & $\Z/6\Z$   \\
$68$ &  \texttt{<24,12>} &       & $3$   & no &  14 34 36   & $\Z/3\Z$ & \\
$69$ &  \texttt{<24,8>} &       & $1$   & no & 34 38 39 41   & & \\
$70$ &  \texttt{<36,10>} &       & $3$   & no & 37 42 43   & & \\
$71$ &  \texttt{<36,10>} &       & $3$   & no & 41 42 46   & &  $\Z/3\Z$  \\
$72$ &  \texttt{<36,9>} &       & $3$   & no & 13 42   & &  $\Z/3\Z$  \\
$73$ &  \texttt{<36,12>} &       & $3$   & no & 37 38 44 47 48   & & \\
$74$ &  \texttt{<54,8>} &       & $3$   & no &   45 51  & & \\
$75$ &  \texttt{<54,13>} &       & $3$   & no &42 46 47 50   & & $\Z/3\Z$  \\
$76$ &  \texttt{<54,12>} &       & $3$   & no & 43 44 45 48 50  & & \\
$77$ &  \texttt{<60,5>} &       & $2$   & no & 18 25 35   & & \\
$78$ &  \texttt{<60,5>} &       & $3$   & no & 17 25 36   & & $\Z/3\Z$  \\
$79$ &  \texttt{<81,7>} &       & $3$   & no & 50 51 52  & & $\Z/3\Z$  \\
\hline
$80$ &  \texttt{<32,49>} &       & $2$   & no & 54 56   & &  \\
$81$ &  \texttt{<32,6>} &       & $2$   & yes &   55 56   & & $\Z/2\Z$ \\
$82$ &  \texttt{<32,27>} &       & $2$   & yes & 53 55 56 57 58  & & \\
$83$ &  \texttt{<48,30>} &       & $2$   & no & 39 58 60   & & \\
$84$ &  \texttt{<48,49>} &       & $2$   & yes & 38 53 60 64  & & \\
$85$ &  \texttt{<48,33>} &       & $2$   & yes & 40 54 59  & & \\
$86$ &  \texttt{<48,48>} &       & $2$   & no &  41 57 60 66  & &  $\Z/2\Z$ \\
$87$ &  \texttt{<48,48>} &       & $6$   & yes &  37 57 65 67 68  & & \\
$88$ &  \texttt{<72,40>} &     & $3$   & no & 34 70 71 72  & & \\
$89$ &  \texttt{<72,25>} &    $2$   & $3$   & no & 48 59 61 62 63   & & \\
$90$ &  \texttt{<80,49>} &       & $2$   & yes &  7 53  & & \\
$91$ &  \texttt{<108,40>} &       & $3$   & no & 71 75   & & $\Z/3\Z$  \\
$92$ &  \texttt{<108,15>} &       & $3$   & no & 40 74  & & \\
$93$ &  \texttt{<108,38>} &       & $3$   & no & 70 73 75 76  & & \\
$94$ &  \texttt{<108,37>} &       & $3$   & no & 39 72 75  & & \\
$95$ &  \texttt{<120,34>} &       & $3$   & yes & 37 49 68 78  & & \\
$96$ &  \texttt{<120,34>} &       & $2$   & yes &  41 49 66 77  & & \\
$97$ & \texttt{<162,10>} &       & $3$   & no & 74 76 79  & & \\
\hline
$98$ &  \texttt{<64,138>} &       & $2$   & yes &  80 81 82  & & \\
$99$ &  \texttt{<96,204>} &       & $2$   & no & 62 64 80  & & \\
$100$ & \texttt{<96,204>} &       & $6$   & no & 61 65 80   & & \\
$101$ & \texttt{<96,201>} &    $2$   & $2$   & no & 63 80 85   & & \\
$102$ & \texttt{<96,195>} &       & $2$   & yes & 69 82 83 84 86   & & \\
$103$ & \texttt{<160,234>} &       & $2$   & yes & 25 82 90   & & \\
$104$ & \texttt{<216,88>} &  $2$     & $3$   & no & 63 92  & & \\
$105$ & \texttt{<216,158>} &       & $3$   & no &  69 88 91 93 94  & & \\
$106$ & \texttt{<324,160>} &       & $3$   & no &  36 79 91   & & $\Z/3\Z$  \\
$107$ & \texttt{<360,118>} &       & $6$   & no &   66 67 72 77 78  & & $\Z/3\Z$  \\
\hline
$108$ & \texttt{<192,1493>} &       & $6$   & yes & 87 98 100   & & \\
$109$ & \texttt{<192,201>} &       & $2$   & yes & 84 98 99   & & \\
$110$ & \texttt{<288,860>} &    $2$   & $6$   & no &  89 99 100 101  & & \\
$111$ &  \texttt{<648,533>} &   $2$    & $3$   & no & 89 97 104   & & \\
$112$ &  \texttt{<648,704>} &       & $3$   & no & 68 97 105 106   & & \\
$113$ &  \texttt{<720,763>} &       & $6$   & yes & 86 87 88 95 96 107  & & \\
\hline
$114$ &  \texttt{<576,8277>} &   $2$    & $6$   & yes & 73 108 109 110  & & \\
$115$ &  \texttt{<960,11358>} &       & $2$   & yes &  77 102 103 109  & & \\
\hline
$116$ &  $G$ &    $2$     & $6$   & yes &  111 112 113 114 115  & & \\
\hline
\end{longtable}

\bibliographystyle{alpha}
\bibliography{Rationality}

\end{document}